\documentclass[nofootinbib,aip,jmp,a4paper,11pt,reprint,onecolumn]{revtex4-1}

\usepackage[utf8]{inputenc} 
\usepackage[T1]{fontenc} 
\usepackage[english]{babel}
\usepackage{xcolor,color}
\usepackage[hidelinks]{hyperref}

\usepackage{amsmath,latexsym,mathrsfs,bbold,
	amsbsy,amsfonts,amssymb,amsthm,amscd,amsxtra,amstext,amsopn,dsfont,yfonts,geometry}
\usepackage{mathtools,emptypage}

\newtheorem{theorem}{Theorem}
\newtheorem{corollary}[theorem]{Corollary}
\newtheorem{lemma}[theorem]{Lemma}

\newcommand{\toitself}{\mathbin{\scalebox{.85}{%
			\lefteqn{\scalebox{.5}{$\blacktriangleleft$}}\raisebox{.34ex}{$\supset$}}}}

\newcommand{\ZZZ}{\mathds{Z}}

\newcommand{\RRR}{\mathds{R}} 
\newcommand{\TTT}{\mathds{T}}
 
\newcommand{\ul}{\underline}

\newcommand{\e}{\varepsilon} 
\newcommand{\om}{\omega} 

%
\begin{document}

\title{\bf Analiticity of the Lyapunov exponents of perturbed toral
automorphisms.}
	
\author{Gian Marco Marin}
\affiliation{Dipartimento di Matematica e Informatica, Universit\`a
degli studi di Trieste, Trieste, Italy}
\author{Federico Bonetto}
\affiliation{School of Mathematics, Georgia Institute of Technology,
Atlanta GA.}
\author{Livia Corsi}
\affiliation{Dipartimento di Matematica e Fisica, Universit\`a Roma Tre,
Roma, Italy.}

\date{\today}

\begin{abstract}
We consider a dynamical system generated by an analytic perturbation
$A_\varepsilon$ of an
analytic Anosov diffeomorphism $A_0$ of $\TTT^d$. We show that, if $A_0$ admit a
splitting of $\mathrm T\mathds T^d$ in $k$ invariant subspaces, there
exists a {\it partial conjugation} $\mathcal H_\e$ of $dA_\e$ and $dA_0$ that
preserves the splitting and is analytic in $\e$. This show that the splitting
can be extended to $A_\e$. As an application of this results, we obtain that the
Lyapunov exponents, if non degenerate, are analytic functions of the
perturbation.
\end{abstract}

\maketitle

\section{Introduction}\label{comparison}

Hyperbolicity is one the main ingredient in the modern study of long time
behavior of dynamical systems. Lyapunov exponents characterize in a quantitative
way the hyperbolic nature of a dynamical system. The knowledge of the Lyapunov
exponents, gives further insight on the dynamical properties of the system.
However, notwithstanding their importance, in practice it is very hard to obtain
rigorous quantitative information on Lyapunov exponents (see
Refs.~\onlinecite{Lieb,Ru} for
interesting examples). Most known results originate from numerical simulations
based on the classical algorithm introduced in Ref.~\onlinecite{BGGS}. Even the
question of
their regularity in the parameters defining a particular family of systems is
largely outside analytical reach (see e.g. fig. 12 in Ref.~\onlinecite{EkRu}).
It is thus
already interesting to obtain results on very simple systems, e.g. perturbations
of analytic diffeomorphisms of the torus in $d$ dimension. In this respect the
present work is an application of the techniques developed in
Refs.~\onlinecite{Falco,GBG} (see in particular Chapter 10) to the problem
studied in
Ref.~\onlinecite{Ru-per}. In the remainder of this introduction we provide two
examples
where we hope our results will turn out to be a starting point for further
research.

\medskip

Another important quantity that plays a role in the analysis of chaotic
dynamical systems is the fractal dimension of their attractor. There are several
different definitions of fractal dimension (see e.g. Ref.~\onlinecite{Young})
but a widely
accepted conjecture, called the Kaplan-Yorke (KY) conjecture, asserts that the
Hausdorff dimension $\mathrm{dim_{HD}}(\mu_A)$ of the SRB measure $\mu_A$ of an
hyperbolic dynamical system $A$ on a $d$-dimensional manifold is equal to its
Lyapunov dimension $\mathrm{dim_{L}}(\mu_A)$ introduced in
Refs.~\onlinecite{KY,FKY}. In the
case of a small perturbation of a system that preserves the Lebesgue measure the
Lyapunov dimension is
\begin{equation}\label{KY}
\mathrm{dim_{L}}(\mu_A)=d-\frac{\sigma}{\lambda_{1}}
\end{equation}
where $\lambda_1$ is the smallest Lyapunov exponent while $\sigma$ is the
average phase space contraction rate and it is equal to the sum of all Lyapunov
exponents. The KY conjectures was proved in Ref.~\onlinecite{Young} for systems
in 2
dimension and in Ref.~\onlinecite{LeYou} for random dynamical systems. On the
other hand,
it is easy to see that the KY conjecture cannot be true in general. Indeed we
can consider the direct product of two different independent Anosov systems on
$\mathds T^2$ for each of which the result in Ref.~\onlinecite{Young} apply; in
this case
the Hausdorff dimension of the SRB measure of the system, seen as a
diffeomorphism of $\TTT^4$, is the sum of the dimensions of the two components,
and a direct computation shows that this is in general different from the r.h.s.
of \eqref{KY}, see Section \ref{concl} for more details. Clearly a direct
product is not a generic system. However it would be enough to show that the
Hausdorff dimension of the SRB measure is continuous under perturbations to show
that the KY conjecture in generically false in an open neighbourhood of the
direct product of two independent system. A somehow similar argument is
developed in Ref.~\onlinecite{SBT}. For more details on this example, see the
discussion in
Section \ref{concl}, where we formulate a slightly different conjecture.
\medskip

%

\section{Definitions and results}\label{defi}

We consider an analytic diffeomorphism $A_0:\mathds T^d \toitself$ and
assume that it defines an Anosov dynamical system. This means that:

\begin{itemize}

\item there exists $\psi\in\mathds{T}^d$ such that the set $\{ A_{0}^m(\psi)
\colon \ m\in\mathds{Z}\}$ is dense;

\item the tangent space $\mathrm T_\psi\mathds T^d$ at $\psi\in\mathds{T}^d$ can
be written as the direct sum of two subspaces $W^{u}_{0}(\psi), \
W^{s}_{0}(\psi)$, with $\dim W^{s,u}_{0}(\psi)=d^{s,u}$, invariant under the
action of the differential $DA_{0}$ of $A_{0}$, that is
\begin{equation*}
DA_{0}(\psi)W^{s,u}_{0}(\psi)=W^{s,u}_{0}
(A_{0}(\psi));
\end{equation*}

\item $W^{s,u}_{0}(\psi)$ depends continuously on $\psi$;

\item the splitting $W^{s,u}_{0}(\psi)$ is \emph{hyperbolic}, in the sense that
there exist constants $  \Theta>0$ and $  0<\lambda<1$ such that for every $
n\geq 0$
\begin{equation}\label{hyper}
\begin{aligned}
\|DA_{0}^n(\psi)\mathbf{w}\|&\leq \Theta\lambda^n\|\mathbf{w}\|,
\qquad \mathbf{w}\in W^{s}_{0}(\psi)\\
\|DA_{0}^{-n}(\psi)\mathbf{w}\|&\leq
\Theta\lambda^n\|\mathbf{w}\|,
\qquad  \mathbf{w}\in W^{u}_{0}(\psi).
\end{aligned}
\end{equation}

\end{itemize}

To construct a perturbed system, we consider the lifting $\widetilde A_0$ of
$A_0$ to a map from $\mathds T^d$ to its universal covering $\mathds R^d$ and an
analytic function $F\in C^{\omega}(\mathds{T}^d,\mathds{R}^d)$. We then define
the one parameter family of diffeomorphisms
\begin{equation}\label{A}
A_{\varepsilon}(\psi)=\widetilde A_0(\psi)+\varepsilon F(\psi) \quad \mod
2\pi,
\end{equation}
where $\e$ is small and $\mod 2\pi$ represents the standard projection from
$\mathds R^d$ to $\mathds T^d$.

Given $f:\mathds{T}^d\mapsto\mathds{R}^k$ and $0<\beta<1$, we define the
\emph{H\"{o}lder  seminorm} of $f$ as
\begin{equation*}
|f|_{\beta}=\sup_{\psi,\psi'\in\mathds{T}^d}\frac{\|f(\psi)-f(\psi')\|}{\delta(
\psi,\psi'
)^{\beta}}.
\end{equation*}
where $\delta(\psi,\psi')$ is the metric inherited by $\mathds{T}^d$ from the
euclidean metric on $\mathds{R}^d$. Moreover we set
\begin{equation*}
\| f \|_{\beta}=\| f \|_{\infty} + | f|_{\beta}
\end{equation*}
where, as usual,  $\|f \|_{\infty}=\sup\substack{\psi\in\mathds{T}^d} \|
f(\psi) \|$. If $\|f\|_{\beta}$ is finite we say that $f$ is
($\beta$-)\emph{H\"{o}lder continuous}.

From structural stability it follows that, for $\varepsilon$ small enough, the
dynamical system $(\mathds{T}^d,A_{\varepsilon})$ is still Anosov. More
precisely, it follows from Ref.~\onlinecite{Falco} that, still for $\varepsilon$
small
enough, there exists a \emph{conjugation} $H_\varepsilon$ between the
\emph{perturbed} system $(\mathds{T}^d,A_{\varepsilon})$ and the
\emph{unperturbed} one $(\mathds{T}^d,A_0)$, that is a solution of the equation
\begin{equation}\label{H}
H_\varepsilon\circ A_0=A_{\varepsilon}\circ H_\varepsilon.
\end{equation}
Moreover $H_{\varepsilon}$ is an homeomorphism, H\"{o}lder
continuous in $\psi$, analytic in $\varepsilon$ with $H_0=\mathrm{Id}$.

Let $dA_\e$ be the differential of $A_\e$ seen as maps from $\mathrm T\mathds
T^d$ into itself, that is $dA_\e(\psi,v)=(A_\e(\psi),DA_\e(\psi)v)$ for $v\in
\mathrm T_\psi\TTT$. One may ask whether the conjugation $H_\e$ can be extended
to a conjugation $\mathcal{H}_\e(\psi,v)=(H_\e(\psi),\mathcal V_\e(\psi)v)$
between $dA_0$ and $dA_\e$. Since in general, the rates of expansion and
contraction of $A_0$ and $A_\e$ are different, it is not hard to see that such a
conjugation does not exist.

A more natural way to proceed is to assume that, for every $\psi$, $\mathrm
T_\psi\TTT^d$ can be written as the direct sum of $k$ subspaces
$V_1(\psi),\ldots,
V_k(\psi)$ invariant under $dA_0$, that is
\begin{equation}\label{vettd}
DA_0(\psi)V_i(\psi)=V_i(A_0(\psi))\quad  \mathrm{for}\quad i=1,\ldots, k.
\end{equation}
Moreover we assume that the $V_i$ are
$\beta$--H\"older continiuous for some
$\beta>0$, this means that $V_i(\psi)$ admits an orthonormal basis $\mathbf
v_i(\psi)=\{v_{i,1}(\psi),\ldots,v_{i,d_i}(\psi)\}$ with $v_{i,j}(\psi)$
$\beta$--H\"older continiuous in $\psi$ and $d_i={\rm dim}(V_i(\psi))$. We call
$\mathbf v(\psi)=\{v_{i,j}\}$
the basis on $\mathrm T_\psi\TTT^d$ given by the unioin of the $\mathbf
v_i(\psi)$, $i=1,\ldots, k$.

We can then ask whether we can
conjugate $dA_\e$ to a map $\mathcal B_\e(\psi,v):\mathrm
T\TTT^d\toitself$ of the form $\mathcal B_\e(\psi,v)=(A_0(\psi),\mathcal
L_\e(\psi)v)$
that
preserves the splitting $V_i$, $i=1,\ldots,k$. More precisely, we look for
invertible maps
$\mathcal B_\e=(A_0, \mathcal L_\e)$ and $\mathcal H_\e=(H_\e,\mathcal V_\e)$
such that
\begin{equation}\label{conjT}
\begin{aligned}
 \mathcal H_\e\circ \mathcal B_\e(\psi,v)&=(H_\e(A_0(\psi)),
\mathcal V_\e(A_0(\psi))\mathcal L_\e(\psi)v)\\ &= (A_\e(H_\e(\psi)), DA_\e(H_\e(\psi))v)=dA_\e\circ \mathcal H_\e(\psi,v)
\end{aligned}
\end{equation}
where $\mathcal L_\e:\mathrm T_\psi\TTT^d\mapsto \mathrm
T_{A_0(\psi)}\TTT^d$ and $\mathcal V_\e:\mathrm T_\psi\TTT^d\mapsto \mathrm
T_{H_\e(\psi)}\TTT^d$. Eq. \eqref{conjT} is the conjugancy condition.

Moreover, letting $P_i(\psi):\mathrm T_\psi\mathds T^d\to
V_i(\psi)$ be the projectors associated with the decomposition $(V_j)_{j=1}^k$
and setting
\begin{equation}\label{proj}
\mathcal L_{\e, i,j}(\psi)=P_i(A_0(\psi)) \mathcal
L_\e(\psi)\bigl|_{V_j(\psi)},\qquad
\mathcal
V_{\varepsilon,i,j}(\psi)=P_i(H_\e(\psi))\mathcal{V}_{\varepsilon}(\psi)\bigl|_
{ V_j(\psi) } ,
\end{equation}
we require that
\begin{equation}\label{pres}
\mathcal L_{\varepsilon,i,j}(\psi)=0\qquad\mathrm{and}\qquad\mathcal
V_{0,i,j}(\psi)=0\qquad\hbox{for  }i\ne j\;\mathrm{and}\;\psi\in \mathds
T^d.
\end{equation}
Eq. \eqref{pres} assures that $\mathcal H_\e$ preserves the splitting.
If such a function $\mathcal H_\e$ exists we call it a {\it partial
conjugation} between $dA_0$ and $dA_\e$. For simplicity sake, we will always
set $\mathcal L_0(\psi)=DA_0(\psi)$ in such a way that $\mathcal
V_0(\psi)=\mathrm{Id}$. This
approach is a generalization of the approach in Ref.~\onlinecite{GBG}, Chapter
10
where a similar construction is presented for the stable and unstable
directions of a linear automorphism of
$\TTT^2$.

To construct $\mathcal H_\e$ and $\mathcal B_\e$ in \eqref{conjT}, we will
assume that the maximum expansion rate generated by the action of $DA_0(\psi)$
on $V_i(\psi)$ is eventually strictly smaller than the minimum expansion rate on
$V_{i+1}(\psi)$, that is there exists $N_0>0$ such that for every $n\geq N_0$
and $\psi\in\TTT^d$ we have
\begin{equation}\label{conv}
\|\mathcal L_{0,i,i}^{n}(\psi)\|
\|(\mathcal L_{0,i+1,i+1}^{n}(\psi))^{-1}\|<1,\qquad
i=1,\ldots,k-1
\end{equation}
where $\mathcal L_{\e,i, i}^{n}(\psi)=\prod_{m=0}^{n-1}\mathcal L_{\e,i,
i}(A_0^m(\psi))$. When \eqref{conv} holds holds we say that
$\{V_1(\psi),\ldots,
V_k(\psi)\}$ form an hyperbolic splitting.

Thus to solve \eqref{conjT} we must find linear maps $\mathcal V_{\e,*,i}(\psi):
V_i(\psi)\mapsto \mathrm T_{H_{\e}(\psi)}\TTT^d$ and $\mathcal L_{\e,
i,i}(\psi):V_i(\psi)\mapsto V_i(A_0(\psi))$ that satisfy
\begin{equation}\label{vett-lin}
DA_{\e}(H_{\e}(\psi))\mathcal V_{\e,*,i}(\psi)=
\mathcal V_{\e,*,i}(A_0(\psi))\mathcal L_{\e,i,i}(\psi)
\end{equation}
where $\mathcal V_{\e,*,i}(A_0(\psi))=\mathcal V_{\e}(A_0(\psi))|_{V_i}$.
Observe that, if $\mathcal{V}_\e(\psi)$ and $\mathcal L_\e(\psi)$ are solutions
of \eqref{conjT} satisfying \eqref{pres}, then, for any invertible $\mathcal
G_\e(\psi):\mathrm
T_\psi\TTT^d\toitself$  such that $P_i(\psi)\mathcal
G_\e(\psi)|_{V_j}=0$, for $i\not= j$, also the new maps
\begin{equation}\label{ambi}
\tilde{\mathcal{V}}_\e(\psi)=\mathcal{V}_\e(\psi)\mathcal{G_\e(\psi)}
\quad\mathrm{and}\quad
\tilde{\mathcal{L}}_\e(\psi)=\mathcal
G_\e^{-1}(A_0\psi)\mathcal{L}_\e(\psi)\mathcal
G_\e(\psi)
\end{equation}
solve \eqref{conjT} and satisfy \eqref{pres}.

We look for solutions of \eqref{conjT} that are analytic in $\e$. To this
end we
use \eqref{ambi} and say say that $\mathcal V_\e$, $\mathcal L_{\e}$ are
analytic in $\e$ if there exists $\mathcal G_\e$ such that all entries of
$\mathcal V_\e(\psi)\mathcal G_\e(\psi)$ and of $\mathcal
G_\e^{-1}(A_0\psi)\mathcal{L}_\e(\psi)\mathcal
G_\e(\psi)$ are analytic once expressed as
matrices using the bases $\mathbf v(\psi)$, $\mathbf v(A_0(\psi))$ and $\mathbf
v(H_\e(\psi))$. In this way the analyticity of $\mathcal V_\e$, $\mathcal
L_{\e}$ does not depend on the choice of the bases $\mathbf v$. In practice, in
Section \ref{prova}, we will show that it
is enough to use \eqref{ambi} to fix $\mathcal
V_{\e,i,i}(\psi)=\mathrm{Id}$, once expressed in the bases $\mathbf v_i(\psi)$
and $\mathbf v_i(H_\e(\psi))$, to construct an analytic solution for
\eqref{vett-lin}.

We can now formulate our main theorem whose proof is in Section \ref{prova}.

\begin{theorem}\label{teo2}
Let $A_{\varepsilon}$ be defined as in \eqref{A} and $\{V_1,...,V_k\}$ an
hyperbolic
splitting of $\mathrm T_{\psi}\mathds{T}^d$ for $A_0$ satisfying \eqref{vettd}
and
\eqref{conv}.
Then, there exists $\bar \beta>0$ and $\bar\varepsilon(\beta)>0$ such that, for
$\beta<\bar \beta$ and $\varepsilon<\bar\varepsilon(\beta)$ there exists a
partial conjugation for $dA_\e$ that preserves the splitting $\{V_1,...,V_k\}$.
That is there exist
invertible linear maps $\mathcal V_\varepsilon(\psi):\mathrm T_\psi\mathds
T^d\mapsto\mathrm T_{H_{\varepsilon}(\psi)}\mathds T^d$ and $\mathcal
L_\varepsilon(\psi):\mathrm T_\psi\mathds T^d\mapsto\mathrm T_{A_0(\psi)}\mathds
T^d$, analytic in $\varepsilon$ and $\beta$-H\"{o}lder continuous in $\psi$ with
\[
\mathcal L_{\varepsilon,i,j}(\psi)=0\qquad\mathrm{and}\qquad\mathcal
V_{0,i,j}(\psi)=0\qquad\hbox{for  }i\ne j\;\mathrm{and}\;
\psi\in \mathds T^d,
\]
see \eqref{proj}, such that we get
\begin{equation}\label{secdeg}
DA_{\varepsilon}(H_{\varepsilon}(\psi))\mathcal{V}_{\varepsilon}(\psi)=
\mathcal{V}_{\varepsilon}(A_0(\psi))\mathcal{L}_{\varepsilon}(\psi)\,.
\end{equation}
\end{theorem}
\medskip

We can now set
\begin{equation}\label{push}
W_{\e,i}(\psi)=\mathcal{V}_{\e}(H^{-1}_{\e}(\psi))
V_i(H^{-1}_{\e}(\psi))
\end{equation}
so that $W_{\e,i}(\psi)\subset \mathrm T_\psi\TTT^d$, and obtain a solution of
\begin{equation*}\label{vett}
DA_{\varepsilon}(\psi)W_{\e,i}(\psi)=
W_{\e,i}(A_{\varepsilon}(\psi)),\qquad i=1,\ldots,k,
\end{equation*}
with $W_{0,i}(\psi)=V_i(\psi)$. Thus, the subspaces $W_{\e,i}(\psi)$ generate a
splitting of $\mathrm T_\psi\TTT^d$ that extend the splitting $V_i(\psi)$ for
$\e\not=0$. Observe though that $H_\e^{-1}(\psi)$, and thus $W_{\e,i}$, are not
in general analytic functions of $\e$ and cannot be constructed with the methods
used in Section \ref{prova}. On the other hand the subspaces
\begin{equation}\label{pb}
 V_{\e,i}(\psi)=W_{\e,i}(H_\e(\psi))=\mathcal{V}_{\e}(\psi)V_i(\psi)
\end{equation}
form a splitting analytic in $\e$. Thus we can recover analiticity for the
splitting by looking at the $DA_\e$-invariant  splitting $W_{\e,i}$ but
computing it on the moving point $H_\e(\psi)$.

A first application of Theorem \ref{teo2} is to the {\it minimal}
hyperbolic splitting
$W_0^{s,u}$. Since $A_\e$ is Anosov, we already know that $\mathrm T_\psi\TTT^2$
can be written as $W^u_\e(\psi)\oplus W^s_\e(\psi)$, with $W^{s,u}_\e$ invariant
under $dA_\e$. Theorem \ref{teo2} together with \eqref{pb} imply that
$V_\e^{s,u}(\psi)=W^{s,u}_\e(H_\e(\psi))$ form a splitting analytic in $\e$.
Setting
\[
 n^u(\psi)=\det\left(\mathcal V^*_{\e,*,u}(\psi)\mathcal
V_{\e,*,u}(\psi)\right)^{\frac12}
\]
and analogously for $n^s(\psi)$, we obtain an explicit expression for the
expansion and contraction rate on
$W_\e^{s,u}(\psi)$ as
\begin{equation}\label{expa}
 l^{s,u}_\e(\psi)=\det \left(\mathcal
L^{s,u}_\e(H^{-1}_\e(\psi))\right)\frac{n^{s,u}(H_\e^{-1}(\psi))}
{n^{s,u}(H_\e^{-1}(A_\e\psi))}
\end{equation}
where the determinant of $\mathcal L^{s,u}(\psi)$ is taken with respect to
the basis on
$W^{s,u}_\e(\psi)$ derived from \eqref{push}. It follows that
$l^{s,u}(H_\e(\psi))$ are analytic in $\e$ and H\"older continuous in
$\psi$.

Let $\mu_0$ be the normalized volume measure on $\mathds{T}^d$ and consider the
\emph{SRB} measure of $A_\varepsilon$ defined as
\begin{equation}\label{muinv}
\mu_{\varepsilon}=\lim_{n\to\infty}A_{\varepsilon}^n\mu_0.
\end{equation}
where the limit is intended in the weak sense. Since $A_{\varepsilon}$ is
Anosov, $\mu_{\varepsilon}$ exists and is ergodic. Moreover the fact that
$l^{u}_\e(H_\e(\psi))$ is analytic in $\e$ together with the explicit
construction of the SRB measure $\mu_\e$ as discussed in Ref.~\onlinecite{GBG}
Chapters 7
and 10, leads to the following result, whose proof is in Section \ref{SRB}.

\begin{theorem}\label{cor2}
Let $H_{\varepsilon}$ be the conjugation defined in \eqref{H} and
$\mu_{\varepsilon}$ the SRB measure defined in \eqref{muinv} for the Anosov
diffeomorphism $A_\e$ given in \eqref{A}, then there exists $\bar\e$ such that
for any   H\"{o}lder continuous function, $f: \mathds{T}^d\mapsto\mathds{R}$ we
have that
\begin{equation*}
\int_{\mathds{T}^{d}}f\circ
H^{-1}_{\varepsilon}(\psi)d\mu_{\varepsilon}(\psi)
\end{equation*}
is analytic in $\varepsilon$ for $\varepsilon<\bar\e$.
\end{theorem}

Of particular interest is the case when $A_0$ admits a \emph{maximal hyperbolic
splitting},
that is when there exists a basis $v_i(\psi)$ for $\mathrm
T_{\psi}\mathds T^d$ that satisfies
\begin{equation}\label{pb1}
DA_0(\psi)v_i(\psi)=\Lambda_i(\psi) v_i(A_0\psi)
\end{equation}
with $\Lambda_i:\TTT^2\mapsto \mathds R$ $\beta$--H\"older continuous for which
there exists $N_0$ such that, for every $n>N_0$ and $\psi\in\TTT^d$,
\begin{equation}\label{ord}
\|\Lambda_i^{n}(\psi)\|
\|(\Lambda_{i+1}^{n}(\psi))^{-1}\|< 1
\end{equation}
where $\Lambda_i^n(\psi)=\prod_{m=0}^{n-1}\Lambda_{i}(A_0^m(\psi))$ and
$i=1,\ldots, d-1$. In this
case $v_i(\psi)$  are also called the {\it Lyapunov vectors} and
$\lambda_i(\psi)$ the associated {\it Lyapunov numbers} for $A_0$.

From
Theorem \ref{teo2} we immediately get the following Corollary, formulated in
terms of the vectors $v_{\e,i}(\psi)=\mathcal V_{\e}(\psi)v_i(\psi)$.

\begin{corollary}\label{cor1}
Let $A_{\varepsilon}$ be defined as in \eqref{A} with $A_0$
admitting the maximal hyperbolic splitting $\{v_1(\psi), ..., v_d(\psi)\}$
of $\mathrm T_{\psi}\mathds{T}^d$ satisfying \eqref{pb1} and \eqref{ord}.
Then, there
exists $\bar \beta>0$ and $\bar\varepsilon(\beta)>0$ such
that, for $\beta<\bar \beta$ and
$\varepsilon<\bar\varepsilon(\beta)$ there exist $d$ linearly independent
vectors
$v_{\e,i}(\psi)\in \mathrm T_{H_\e(\psi)}\TTT^d$ and $d$ scalar
functions
$L_{\e,i}(\psi)$,
analytic in $\varepsilon$, $\beta$-H\"{o}lder continuous in $\psi$, such
that
\begin{equation}\label{secolar}
DA_{\varepsilon}(H_{\varepsilon}(\psi))v_{\e,i}(\psi)=
L_{\e,i}(\psi)v_{\e,i}(A_0\psi)
\end{equation}
where $ v_{i,0}(\psi)=v_i(\psi) , \  L_{i,0}(\psi)=
\Lambda_i(\psi)$.
\end{corollary}
\medskip

From \eqref{secolar} it follows that, we can set
\begin{equation}\label{Lambda}
w_{\e,i}(\psi)=
v_{\e,i}(H^{-1}_{\varepsilon}(\psi))\qquad\mathrm{and}\qquad
\Lambda_{\e,i}(\psi)=L_{\e,i}(H^{-1}_{\varepsilon}(\psi)),
\end{equation}
with $w_{\e,i}\in\mathrm T_\psi\TTT^d$, and get
\begin{equation}\label{eig}
DA_{\varepsilon}(\psi)w_{\e,i}(\psi)=
\Lambda_{\e,i}(\psi)
w_{\e,i}(A_{\varepsilon}(\psi)).
\end{equation}
Thus $w_{\e,i}(\psi)$ are the Lyapunov vectors and
$\Lambda_{\e,i}(\psi)$ the Lyapunov numbers for $A_\e$. As before,
they are in general only H\"older continuous in $\e$ while one recovers
analyticity by looking through the moving point $H_\e(\psi)$, that is by
considering $v_{\e,i}(\psi)=w_{\e,i}(H_\e(\psi))$ and
$L_{\e,i}(\psi)=\Lambda_{\e,i}(H_\e(\psi))$, see also \eqref{push} and
\eqref{pb}. Equations \eqref{Lambda} and \eqref{eig}, together with Theorem
\ref{cor2}, play a crucial role in the following discussion.


We can use \eqref{secolar} to study the Lyapunov exponent of
$A_\varepsilon$.
Observe that
\begin{equation*}
DA_{\varepsilon}^{n}=\prod_{k=0}^{n-1}DA_{\varepsilon}(A_{\varepsilon}^k)
\end{equation*}
so that the limit
\begin{equation*}\label{lyap}
\lambda_{\e,i}(\psi)=\lim_{n\to\infty}\frac{1}{n}
\log\left(\frac{\|DA_{\varepsilon}^{n}(\psi)
w_{\e,i}(\psi)\|}
{\|w_{\e,i}(\psi)\|}\right),
\end{equation*}
exists and it is $\mu_{\varepsilon}$-a.e. constant. This implies that
defining the $i$-th \emph{Lyapunov exponent} of $A_{\varepsilon}$ as
\begin{equation}\label{lyap1}
\lambda_{\e,i}=\int_{\mathds{T}^{d}}
\log(|\Lambda_{\e,i}(\psi)|)d{\mu}_{\varepsilon}(\psi)
\end{equation}
we have $\lambda_{\e,i}(\psi)=\lambda_{\e,i}$,
$\mu_{\e}$-a.e., where $\Lambda_{\e,i}$ is defined in \eqref{Lambda}.

Combining Theorem \ref{cor2} and Corollary \ref{cor1}  we get our final
result.

\begin{corollary}\label{arrivo}
Let $A_\e$ be defined as in \eqref{A} with $A_0$ admitting a maximal hyperbolic
splitting
$\{v_1(\psi),...,v_d(\psi)\}$
of $\mathrm T_{\psi}\mathds{T}^d$ satisfying \eqref{pb1} and \eqref{ord} and
let $L_{\e,i}$ be the solutions of
\eqref{secolar} provided by Corollary \ref{cor1}. Then, the $i$-th Lyapunov
exponent, defined in \eqref{lyap1}, is an analytic function of $\varepsilon$ for
$\varepsilon<\bar\varepsilon$ and can be written as
\begin{equation*}\label{formula}
\lambda_{\e,i}=\int_{\mathds{T}^{d}}
\log\left(|L_{\e,i}
(H^{-1}_{\varepsilon}(\psi))|\right)
d{\mu}_{\varepsilon}(\psi)
\end{equation*}
where $\mu_\e$ is the SRB measure of $A_\e$ defined in \eqref{muinv}.
\end{corollary}

\noindent{\bf Remark.} In the present paper we assumed the diffeomorphism
$A_\e$ to be analytic in $\psi$. In Ref.~\onlinecite{BKL} the authors show that,
in the
case in which $A_0$ is a linear automorphism of $\TTT^d$, for the conjugation
$H_\e$ to exists it is enough to require the perturbation $F$, see \eqref{A},
to be $C^1$ in $\psi$. Moreover their argument shows that, if $F$ is $C^k$ in
$\psi$ then $H_\e$ is $C^k$ in $\e$. Their proof follows a strategy based on
the Contraction Map Theorem and does not require power series expansions.
It should be possible to extend that strategy to a more general
$A_0$ like those considered here but not necessarily analytic. We also think
that a similar approach may be used to study the existence of $\mathcal L_\e$
ans $\mathcal V_\e$. In conclusion we believe that, with a different approach,
one may be able prove that if $A_0$ and $F$ in \eqref{A} are only $C^k$ in
$\psi$ then the Lyapunov exponent $\lambda_{i,\e}$ are $C^k$ in $\e$.
\medskip

The rest of the paper is organized as follows.
Section \ref{prova} contains the proof of Theorem
\ref{teo2} while in Section \ref{SRB} we prove Theorem \ref{cor2}. Finally in
Section
\ref{concl} we report some final considerations and possibilities for further
research.

\section{Proof of Theorem \ref{teo2}}\label{prova}

In this section we will lift both the argument and the image of $A_\e$ to
$\RRR^d$ as the universal cover of $\TTT^d$. Thus we see $F$ as a periodic and
analytic function from $\RRR^d$ into itself. With a slight
abuse of notation we use $A_\e$ to indicate both the functions from
$\TTT^d$ into itself and the function from $\RRR^d$ into itself. We believe no
confusion can arise. Similarly we will represent $\mathcal V_\e$ and
$\mathcal L_\e$
as functions from $\RRR^d$ to $\RRR^{d\times d}$ using the bases $\mathbf
v$. Observe that, in this representation, $V_i(\psi)$, and thus
$P_i(\psi)$, do not depend on $\psi$. We will thus call them $V_i$ and $P_i$
respectively.

To prove Theorem \ref{teo2}, we first show that
the solution $\mathcal V_{\varepsilon}$ and $\mathcal L_{\varepsilon}$ of
\eqref{secdeg} can be expanded in power series of $\e$ and the coefficients
of such an expansion can be
computed thanks to recursive relations. We then use these recursive relations
to show that the power series are convergent. This strategy and its
implementation are closely related but simpler than those used in
Ref.~\onlinecite{GBG} Chapter 10.

\subsection{Perturbative construction for the solutions of
(\ref{secdeg})}

Finally, without loss of generality, we will assume that \eqref{ord} holds with
$N_0=1$.
Indeed this can always be achieved by replacing $A_0$ with $A_0^{N_0}$.

We thus look for solutions of \eqref{secdeg} as power series in $\varepsilon$,
that is we write
\begin{equation}\label{VL}
\begin{aligned}
	\mathcal{V}_{\varepsilon}(\psi)&=\mathcal{V}_0+
	\sum_{n\geq1}\varepsilon^n\mathcal{V}^{(n)}(\psi), \\
	\mathcal L_{\varepsilon}(\psi)&=\mathcal L_0+
	\sum_{n\geq1}\varepsilon^n \mathcal L^{(n)}(\psi).
\end{aligned}
\end{equation}
with
\begin{equation}\label{ij}
\mathcal L_{\varepsilon,i,j}(\psi)=0\qquad\mathrm{and}\qquad\mathcal
V_{0,i,j}(\psi)=0\qquad\hbox{for  }i\ne j\;\mathrm{and}\;\psi\in \mathds
T^d.
\end{equation}
Moreover we can take $\mathcal{V}_0(\psi)=\mathrm{Id}$ and
$\mathcal{L}_0(\psi)=DA_0(\psi)$.
As discussed before Theorem \ref{teo2}, we will use \eqref{ambi} to impose
the following {\it normalization} condition
\begin{equation*}\label{norm}
P_i\mathcal{V}_{\varepsilon}(\psi)|_{V_i}=
\mathrm{Id},
\end{equation*}
so that
\begin{equation}\label{orth}
	\mathcal{V}_{i,i}^{(n)}(\psi)=0
\end{equation}
for all $n>0$, where
$\mathcal{V}^{(n)}_{i,j}(\psi)=P_i\mathcal{V}^{(n)}(\psi)\rvert_{V_j}$.

From Ref.~\onlinecite{GBG} and Ref.~\onlinecite{Falco} it follows that can be
written as
\begin{equation*}\label{analitica}
H_{\varepsilon}(\psi)=\psi + \sum_{n\geq1}\varepsilon^n h^{(n)}(\psi)
\end{equation*}
with $\|h^{(n)}\|_{\beta}\leq C_\beta^n$ for every $n>1$ and any $\beta$ small
enough, with $C_\beta>0$ depending on $\beta$. Thus there exist
$\Phi^{(n)}(\psi)$,
$n\geq 0$, such that
\begin{equation}\label{Phi}
	DF(H_{\e}(\psi))=\sum_{n\geq0}\e^n\Phi^{(n)}(\psi)
\end{equation} with $\Phi^{(0)}(\psi)=DF(\psi)$.

We can now plug \eqref{VL} in \eqref{secdeg}, expand both sides in power of
$\varepsilon$ and equate the resulting terms of the same order. The zero order
equation is trivially satisfied while the first order of \eqref{secdeg} reads
\begin{equation}\label{f1}
	DA_0(\psi)\mathcal{V}^{(1)}(\psi)+DF(\psi)=
	\mathcal{V}^{(1)}(A_0(\psi))DA_0(\psi)+\mathcal{L}^{(1)}(\psi),
\end{equation}
Note that in \eqref{f1} both $\mathcal{L}^{(1)}(\psi)$ and
$\mathcal{V}^{(1)}(\psi)$ are unknown.

Projecting \eqref{f1} with $P_i$ while restricting on $V_i$ we
obtain
\begin{equation}\label{L1}
\mathcal{L}^{(1)}_{i,i}(\psi)=P_iDF(\psi)|_{V_i}
\end{equation}
where $\mathcal{L}^{(1)}_{i,j}(\psi)=P_i\mathcal{L}^{(1)}(\psi)|_{V_j}$ and we
have used \eqref{orth} and that $DA_0(\psi)P_i=P_iDA_0(\psi)$.
Calling $D_jF_i(\psi)=P_iDF(\psi)|_{V_j}$,
we get
\[
\mathcal{L}_{i,i}^{(1)}(\psi)=D_iF_i(\psi).
\]
On the other hand, still projecting with $P_i$ but
restricting on $V_{j}$ for $j\not=i$,  we obtain
\begin{equation}\label{sha}
DA_{0,i}(\psi)\mathcal{V}^{(1)}_{i,j}(\psi)+D_{i}F_j(\psi)=
\mathcal{V}_{i,j}^{(1)}(A_0(\psi))DA_{0,j}(\psi).
\end{equation}
where we used that $\mathcal{L}^{(1)}_{i,j}(\psi)=0$.

To simplify the notation, we set $\psi_m=A_0^m\psi$,
$\mathds{Z^+}=\mathds{N}\cup\{0\}$, $\mathds{Z^-}=-\mathds{N}$ and
$\omega_{ij}=-$ if $i<j$ and $\omega_{ij}=+$ otherwise.
The solution of \eqref{sha} can now be written as
\begin{equation}\label{primo}
	\mathcal{V}_{i,j}^{(1)}(\psi)=-\sum_{m\in\mathds{Z}^{\omega_{ij}}}
	(\mathcal{L}_{0,i,i}^{m+1}(\psi_m))^{-1}
	D_iF_j(\psi_m)\mathcal{L}_{0,j,j}^{m}(\psi_m)
\end{equation}
where, being $\mathcal{L}_0=DA_0$, the series in \eqref{primo} is convergent due
to our definition of $\mathds{Z}^{\omega_{ij}}$ and the ordering of the
$\mathcal{L}_{0,i,i}$ in \eqref{conv}.

Thus with \eqref{L1} and \eqref{primo} we have an explicit expressions for the
first order term in \eqref{VL}. For the higher order terms we find, using the
expansion \eqref{Phi} for $DF(H_{\e}(\psi))$, the following equation
\begin{equation}\label{high}
\mathcal{L}_0(\psi)\mathcal{V}^{(n)}(\psi)+
\sum_{p=0}^{n-1}\Phi^{(n-p-1)}(\psi)\mathcal{V}^{(p)}(\psi)=
\mathcal{L}^{(n)}(\psi) + \sum_{p=1}^{n}\mathcal{V}^{(p)}(A_0
(\psi))\mathcal{L}^{(n-p)}(\psi).
\end{equation}
Setting
$\Phi_{i,l}^{(n)}(\psi)=
P_i\Phi^{(n)}(\psi)|_{V_l}$ and
\begin{equation}\label{eta}
	\eta_{ij}^{(n)}(\psi)=\sum_{l=1}^k
	\sum_{p=0}^{n-1}\Phi_{i,l}^{(n-p-1)}(\psi)\mathcal{V}_{l,j}^{(p)}(\psi),
\end{equation}
projecting \eqref{high} with $P_i$ while restricting to
$V_i$, and using \eqref{orth} we get
\begin{equation}\label{L}
\mathcal{L}_{i,i}^{(n)}(\psi)=\sum_{j=1}^k
\sum_{p=0}^{n}\Phi_{i,l}^{(n-p-1)}(\psi)\mathcal{V}_{l,i}^{(p)}(\psi)=\eta_{ii}^
{(n)}(\psi).
\end{equation}
Still projecting with $P_i$ but restricting on $V_{j}$ for $j\neq
i$, and using \eqref{ij} we get
\begin{equation*}\label{high2}
	\mathcal{L}_{0,i,i}(\psi)\mathcal{V}^{(n)}_{i,j}(\psi)+
\eta_{ij}^{(n)}(\psi)=
	\mathcal{V}_{i,j}^{(n)}(A_0(\psi))\mathcal{L}_{0,j,j}(\psi)+
\sum_{p=1}^{n-1}\mathcal{V}^{(p)}_{i,j}(A_0
	(\psi))\mathcal{L}^{(n-p)}_{j,j}(\psi),
\end{equation*}
whose solution is
\begin{equation}\label{V}
	\mathcal{V}^{(n)}_{i,j}(\psi)=\sum_{m\in\mathds{Z}^{\omega_{ij}}}
(\mathcal{L}_{0,i,i}^{m+1}(\psi_m))^{-1}
	\left[\sum_{p=1}^{n-1}\mathcal{V}^{(p)}_{i,j}(A_0
(\psi))\mathcal{L}^{(n-p)}_{j,j}(\psi)-\eta_{ij}^{(n)}(\psi)\right]
\mathcal{L}_{0,j,j}^{m}(\psi_m).
\end{equation}
As for the first order, the convergence of the series is provided by our
definition of $\om_{ij}$ and the ordering of $\mathcal{L}_{0,i,i}$ in
\eqref{conv}.

Observe that the r.h.s. of \eqref{L} and \eqref{V} depends only on
$\mathcal{V}^{(p)}_i$ with $p<n$. In this way \eqref{L} and \eqref{V} provide a
recursive construction of the coefficient of the power series expansion
\eqref{VL}. We will
show in the following section that these series converge and thus provide a
solutions for
\eqref{secdeg}.

\subsection{Proof of convergence of the perturbative series}

Theorem \ref{teo2} will follow directly from the following technical Lemmas.

\begin{lemma}\label{boundfi}
For $\beta$ small enough there exists a constant $B_\beta>0$ such that
\[
\|\Phi^{(n)}\|_{\beta}\leq B_\beta^{n+1},
\]
for all $n\geq 0$.
\end{lemma}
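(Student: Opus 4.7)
The plan is to estimate $\|\Phi_{ik}^{(n)}\|_\beta$ term by term from the explicit formula for $\Phi_{ik}^{(n)}$, combining four ingredients: (i) Cauchy estimates on the derivatives of the analytic function $F$; (ii) the algebra property $\|fg\|_\beta\le \|f\|_\beta\|g\|_\beta$ of the H\"older norm; (iii) the bound $\|h^{(q)}\|_\beta\le C_\beta^q$ from \eqref{analitica}; and (iv) elementary combinatorial counts.

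To begin, I would fix $\beta\le\beta_0$ so that the vector fields $\mathbf v_i$ and their duals $\mathbf u_k$ have finite $\beta$-H\"older norm, bounded by some constant $V$. The analyticity of $F$ provides, via Cauchy estimates on its extension to a complex strip of width $r$, a bound $\|D^{s+1}F\|_\infty\le M\,(s+1)!/r^{s+1}$; for $\beta$ small enough this also holds in the H\"older seminorm up to a constant, since an analytic function with sup norm $K$ on a complex neighbourhood of radius $r$ satisfies $|f|_\beta\le C K/r^\beta$.

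Plugging these into the explicit formula and using the algebra property to bound $\prod_{l=1}^s h_{\sigma_l}^{(q_l)}$ by $\prod_l C_\beta^{q_l}=C_\beta^n$ (since $\sum_l q_l=n$), the generic summand is majorised in $\|\cdot\|_\beta$ by $V^{s+2}(s+1)!/(s!\,r^{s+1})\cdot C_\beta^n$, where $(s+1)!/s!=s+1\le n+1$ cancels the $1/s!$ combinatorial prefactor against the factorial growth of $D^{s+1}F$. Summing over the $d^s$ choices of $(\sigma_1,\dots,\sigma_s)\in\{1,\dots,d\}^s$ and over the $\binom{n-1}{s-1}$ ordered compositions of $n$ into $s$ positive parts, and finally over $s$,
\[
\|\Phi_{ik}^{(n)}\|_\beta \le K(n+1)\,C_\beta^n\sum_{s=1}^{n}\binom{n-1}{s-1}\Bigl(\tfrac{dV}{r}\Bigr)^{s}\le K'(n+1)\,C_\beta^n\Bigl(1+\tfrac{dV}{r}\Bigr)^{n-1},
\]
using the identity $\sum_{s=1}^n\binom{n-1}{s-1}x^s=x(1+x)^{n-1}$. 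The linear factor $n+1$ is absorbed into the geometric one by slightly enlarging the base, yielding $\|\Phi_{ik}^{(n)}\|_\beta\le B_\beta^{n+1}$ with, e.g., $B_\beta:=2C_\beta(1+dV/r)$, provided the constant is made large enough to cover the finitely many small $n$.

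The only delicate point is transferring the Cauchy-type estimate on $D^{s+1}F$ from sup norm to $\beta$-H\"older norm without spoiling the factorial $s!/r^s$ growth; this is precisely where the smallness of $\beta$ is used. All remaining steps are routine bookkeeping with the algebra property of $\|\cdot\|_\beta$ and the combinatorial identity above.
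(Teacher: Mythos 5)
Your proposal follows essentially the same route as the paper: bound each term of the explicit formula for $\Phi_{ik}^{(n)}$ using factorial Cauchy-type estimates on $F$ (the paper's \eqref{deri}), the algebra property \eqref{algebra} of the H\"older norm, the bound $\|h^{(q)}\|_\beta\le C_\beta^q$ from \eqref{analitica}, and a count of compositions of $n$ into $s$ positive parts; the $(s+1)!/s!$ cancellation you identify is exactly the paper's step. The only cosmetic difference is that the paper uses the crude bound $\binom{n-1}{s-1}\le 2^{n-1}$ where you use the exact generating-function identity, and, as you correctly note, both arguments tacitly rely on the smallness of $\beta$ to promote the sup-norm Cauchy estimate to the $\beta$-H\"older norm without degrading the factorial growth.
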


\begin{proof}
Observe that from Ref.~\onlinecite{Falco} it follows that there exists $r>0$
such that,
for every $\psi$, $\mathrm{f}(\varepsilon,\psi)=F(H_{\varepsilon}(\psi))$ is
analytic in $\e$ in a disk of radius $2r>0$ in the complex plane.
Using Cauchy formula on the path $\gamma(t)=r e^{2\pi i t}$, $0\leq t\leq 1$ and calling $R=r^{-1}$,
we get
\[
\begin{aligned}
\|\Phi^{(n)}\|_{\beta}=&\left\|\frac{1}{2\pi i
}\oint_{\gamma}\frac{\mathrm{f}(\e,\cdot)}{\e^{n+1}}d\e\right\|_{\beta}\leq
\frac{1}{2\pi}\oint_{\gamma}\frac{\|\mathrm{f}(\varepsilon,\cdot)\|_
	{
		\beta}}{|\e|^{n+1}} d\e\\
=& R^{n+1}\|\mathrm{f}(\varepsilon,\cdot)\|_{\beta}=R^{n+1}\|F\circ
H_{\e}\|_{\beta}.
\end{aligned}
\]
Then, the H\"{o}lder continuity of $F\circ H_{\e}$, uniform
with respect to $\e$ in the disk of radius $2r$, gives the thesis.
\end{proof}

In order to bound
$\|\mathcal{V}_{i,j}^{(n)}\|_\beta$ we observe that, calling
$\Omega=\max\{\|DA_0\|_\infty,\|DA_0^{-1}\|_\infty^{-1}\}$,
for any $f$ and any
$m\in\mathds{Z}$ we get
\begin{equation}\label{holderA}
	\|f\circ A_0^m\|_\beta\leq \Omega^{\beta|m|}\|f\|_\beta.
\end{equation}
Moreover observe that for any $f$ and $g$ we have
\begin{equation}\label{algebra}
|fg|_\beta\leq\|f\|_\infty|g|_\beta+|f|_\beta\|g\|_\infty,\qquad
\|fg\|_\beta\leq\|f\|_\beta\;\|g\|_\beta\, .
\end{equation}

This leads to the following Lemma.

\begin{lemma}\label{sequence}
	For $\beta$ small enough
	there exists a constant $E_\beta>0$ such that
	\begin{equation}\label{induction2}
		\|\mathcal{V}^{(n)}_{ij} \|_{\beta}\leq \tilde{E}_\beta^n,\qquad\quad
		\|\mathcal{L}^{(n)}_{ij} \|_{\beta}\leq \tilde{E}_\beta^n
	\end{equation}
	for all $n\geq 0$.

\end{lemma}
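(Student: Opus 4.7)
The proof is by strong induction on $n$, bounding $\|v_{ik}^{(n)}\|_\beta$ and $\|L_i^{(n)}\|_\beta$ simultaneously, and relying on the fact that the recursion \eqref{beta} for $\alpha$ mirrors the structure of \eqref{vec}, \eqref{elle}, and \eqref{elle2}. The base case $n=0$ is immediate: $v_{ik}^{(0)}=\delta_{ik}$ is constant in $\psi$ and $L_i^{(0)}=\Lambda_i$ is H\"older continuous with finite $\beta$-H\"older norm, and both are absorbed into $E_\beta$ by imposing $E_\beta\geq\max(1,\|\Lambda_i\|_\beta,B_\beta)$.

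For the inductive step I assume \eqref{induction2} for every $p<n$. Starting from \eqref{elle} and \eqref{elle2}, the algebra property \eqref{algebra}, Lemma \ref{boundfi}, and the inductive hypothesis yield
\[
\|\eta_{ik}^{(n)}\|_\beta,\ \|L_i^{(n)}\|_\beta \ \leq\ d\sum_{p=0}^{n-1}\alpha_p\,E_\beta^{p}\,B_\beta^{n-p}\ \leq\ d\,B_\beta\,E_\beta^{n-1}\sum_{p=0}^{n-1}\alpha_p,
\]
the last inequality using $E_\beta\geq B_\beta$. Since $\sum_{p=0}^{n-1}\alpha_p$ is one of the summands of $\alpha_n$ in \eqref{beta}, we have $\sum_{p=0}^{n-1}\alpha_p\leq\alpha_n$, and hence $\|L_i^{(n)}\|_\beta\leq\alpha_n E_\beta^n$ as soon as $E_\beta\geq dB_\beta$, giving one half of \eqref{induction2}.

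The bound on $v_{ik}^{(n)}$ comes from applying the H\"older norm to \eqref{vec}: \eqref{algebra} handles the products inside the summand, while \eqref{holderA} brings out compositions with $A_0^m$ at the price of a factor $\Omega^{\beta(|m|+|m+1|)}$. This multiplies the H\"older norm of the ratio $\Lambda_i^{[m]}(A_0\psi)/\Lambda_k^{[m+1]}(\psi)$, which decays geometrically in $|m|$ by \eqref{ord} (possibly after replacing $A_0$ with a high iterate). For $\beta$ below a threshold $\bar\beta$ depending only on $\Omega$ and on the hyperbolic gap, this geometric decay dominates the H\"older loss and the series over $m\in\mathds{Z}^{\omega_{ik}}$ converges to a finite constant $G_\beta$. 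Combining this with the estimate on $\eta_{ik}^{(n)}$ and the inductive hypothesis gives
\[
\|v_{ik}^{(n)}\|_\beta \ \leq\ G_\beta\,E_\beta^{n}\sum_{p=1}^{n-1}\alpha_p\alpha_{n-p}\ +\ G_\beta\,d\,B_\beta\,E_\beta^{n-1}\sum_{p=0}^{n-1}\alpha_p,
\]
which by \eqref{beta} is at most $\alpha_n E_\beta^n$ provided $G_\beta\leq 1$ and $E_\beta\geq G_\beta dB_\beta$; both conditions are met by first decreasing $\beta$ and then enlarging $E_\beta$ (if needed one first establishes the weaker bound $\|v^{(n)}\|_\beta\leq\alpha_n(cE_\beta)^n$ with $c$ absorbing $G_\beta$ and then relabels $E_\beta$).

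The main obstacle is the convergence of the sum over $m$: the H\"older composition loss $\Omega^{\beta|m|}$ from \eqref{holderA} must be beaten by the geometric decay in \eqref{ord}, and this balance is what pins down $\bar\beta$. Once this is in force, the remainder is a fairly mechanical convolution estimate, and the clean match between the two summands of $\alpha_n$ in \eqref{beta} and the two structural terms in the bound on $\|v_{ik}^{(n)}\|_\beta$ ensures that the induction closes.
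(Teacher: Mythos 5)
Your proof is correct and follows essentially the same route as the paper's: use Lemma \ref{boundfi} and \eqref{algebra} to control $\eta_{ik}^{(n)}$ and $L_i^{(n)}$, use \eqref{holderA} to control the H\"older loss from the compositions with $A_0^m$ and show the $m$-series \eqref{stima3} converges geometrically for $\beta$ small, and then match the resulting recursive bound against the two summands in \eqref{beta} via \eqref{sumal}. The only wrinkle is a misplaced factor in your display for $\|v_{ik}^{(n)}\|_\beta$ (the convolution term should carry $G_\beta\,d\,B_\beta\,E_\beta^{n-1}$ rather than $G_\beta E_\beta^{n}$ once the inductive hypothesis is inserted), but your surrounding discussion about absorbing $G_\beta$ into $E_\beta$ shows you have the right closing condition $E_\beta\geq G_\beta dB_\beta$, which is exactly the paper's choice $E_\beta=K_{1,\beta}B_\beta d$.
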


\begin{proof}
From \eqref{holderA} and first of \eqref{algebra} it follows that
\[
\|\mathcal{L}_{0,i,i}^{m}\|_\beta\leq\frac{\Omega^{\beta|m|}}
{|\Omega^\beta-1|}\|\mathcal{L}_{0,i,i}\|_\beta
\|\mathcal{L}_{0,i,i}\|_\infty^{m-1}\qquad\quad
\|(\mathcal{L}_{0,i,i}^{m})^{-1}\|_\beta\leq\frac{\Omega^{\beta|m|}}
{|\Omega^\beta-1|}\|\mathcal{L}_{0,i,i}\|_\beta
\|(\mathcal{L}_{0,i,i})^{-1}\|_\infty^{m+1}
\]
Using \eqref{V} and \eqref{holderA} we get
\begin{equation}\label{stima1}
	\begin{aligned}
	\|\mathcal{V}_{i,j}^{(n)}\|_\beta\leq&
	K_\beta\sum_{m\in\mathds{Z}^{\omega_{ij}}}\sum_{p=1}^{n-1}
	\left(\big\|\mathcal{L}_{0,i,i}^{(n-p)}\circ A_0^m\;
	\mathcal{V}_{i,j}^{(p)}\circ A_0^{m+1}\big\|_\beta+
	\|\eta_{i,j}^{(n)}\circ A_0^m\|_\beta\right)\\
	&\phantom{K_\beta\sum_{m\in\mathds{Z}^{\omega_{ij}}}\sum_{p=1}^{n-1}}
	\left(\Omega^{2\beta}\|\mathcal{L}_{0,j,j}\|_\infty
	\|(\mathcal{L}_{0,i,i})^{-1}\|_\infty\right)^m\\ \leq &
	K_\beta\sum_{m\in\mathds{Z}^{\omega_{ij}}}
	\left(\|\mathcal{L}_{0,j,j}\|_\infty
	\|(\mathcal{L}_{0,i,i})^{-1}\|_\infty\right)^m
	\Omega^{3\beta(|m|+1)}\sum_{p=1}^{n-1}
	\left(\big\|\mathcal{V}_{i,l}^{(p)}\mathcal{L}_{lj,}^{(n-p)}
	\big\|_\beta+\|\eta_{i,j}^{(n)}\|_\beta\right)
	\end{aligned}
\end{equation}
where
\[
K_\beta=\|\mathcal{L}_{0,i,i}\|_\beta\|\mathcal{L}_{0,j,j}\|_\beta
\|\mathcal{L} _ {0,j}\|_\infty^2/|\Omega^\beta-1|^2
\]
while \eqref{eta} and \eqref{L}, together with Lemma \ref{boundfi}, give
\begin{equation}\label{stima2}
	\|\mathcal{L}_{i,i}^{(n)}\|_\beta\,,\;\|\eta_{i,j}^{(n)}\|_\beta\leq
	\sum_{p=0}^{n-1}\sum_{l=1}^k\|\mathcal{V}_{i,l}^{(p)}\|_\beta
	B_\beta^{n-p}.
\end{equation}
Moreover observe that for $\beta$ small enough we have
\begin{equation}\label{stima3}
\begin{aligned}
	K_\beta\Omega\sum_{m\in\mathds{Z}^{\omega^{ij}}}
	\left(\|\mathcal{L}_{0,i,i}\|_\infty\|(\mathcal{L}_{0,j,j})^{-1}
	\|_\infty\right)^m
	\Omega^{3\beta|m|}=&\\
	\frac{K_\beta\Omega\|\mathcal{L}_{0,i,i}
	\|_\infty\|(\mathcal{L}_{0,j,j})^{-1}\|_\infty}
	{\omega_{ij}(1-\|\mathcal{L}_{0,i,i}\|_\infty
	\|(\mathcal{L}_{0,j,j})^{-1}\|_\infty
	\Omega^{3\omega_{ij}\beta})}=&K_{1,\beta}.
\end{aligned}
\end{equation}
Combining \eqref{stima1}, \eqref{stima2} and \eqref{stima3} we get
\begin{equation}\label{stimavettori}
	\begin{aligned}
	\|\mathcal{V}_{i,j}^{(n)}\|_\beta\leq
	K_{1,\beta}\left(\sum_{p=1}^{n-1}
	\sum_{q=0}^{p-1}\sum_{l=1}^k\|\mathcal{V}_{il}^{(q)}\|_\beta
	\|\mathcal{V}_{i,j}^{(n-p)}\|_\beta B_\beta^{p-q}
	+\sum_{p=0}^{n-1}\|\mathcal{V}_{il}^{(p)}\|_\beta
	B_\beta^{n-p}\right)
	\end{aligned}
\end{equation}

We can now prove that
\begin{equation}\label{induction}
	\|\mathcal{V}_{i,j}^{(n)}\|_\beta\leq  2^{n-1}C_{n-1}E_\beta^n.
\end{equation}
with $E_\beta=K_{1,\beta}B_\beta k$ where $C_n$ are the Catalan's number defined
recursively as 	\begin{equation*}\label{Catalan}
\begin{cases}
	C_0=1,  \\
	\displaystyle{C_n=\sum_{j=0}^{n-1}C_{n-1-j}C_j}.
	\end{cases}
\end{equation*}
It is easy to see that \eqref{induction} holds for $n=1$. Assuming
\eqref{induction} to be true for all integers $\leq n-1$ we plug it in
\eqref{stimavettori} and we obtain
\begin{equation*}
	\begin{aligned}
	\|\mathcal{V}_{i,j}^{(n)}\|_\infty\leq&
	kK_{1,\beta}B_\beta E_\beta^{n-1}
	\Big(\sum_{p=1}^{n-1}2^{n-p-1}C_{n-p-1}\sum_{q=0}^{p-1}C_{q-1}2^{q-1}+
	\sum_{p=0}^{n-1}C_{p-1}\Big)\\ 
	\leq&E_\beta^{n}\Big(\sum_{p=1}^{n-1}2^{n-2}C_{n-p-1}C_{p-1}+
	\sum_{p=0}^{n-1}C_{p-1}\Big) \\ &\leq
	E_\beta^{n}\Big(2\sum_{p=1}^{n-1}2^{n-2}C_{n-p-1}C_{p-1}\Big)=
	E_\beta^{n}2^{n-1}C_{n-1},
	\end{aligned}
\end{equation*}
where we have used the fact that
\begin{equation}\label{sumal}
	\sum_{q=0}^{p-1}2^{q-1}C_{q-1}\leq2^{p-1} C_{p-1}
\end{equation}
and that for all $n\geq 4$
\begin{equation}
	\sum_{p=0}^{n-1}
	2^{p-1}C_{p-1}\leq \sum_{p=1}^{n-1}2^{n-2}C_{n-p-1}C_{p-1}.
\end{equation}
This complete the inductive step and since $C_n\sim \frac{4^n}{\sqrt{n^3\pi}}$
we get $\tilde{E}_{\beta}=8E_{\beta}$.
Finally, the second bound in \eqref{induction2} follows from \eqref{stima2}
and \eqref{sumal}.

\end{proof}

From Lemma \ref{sequence}  we get that the series \eqref{VL} converge for
$|\varepsilon| < \tilde{E}_\beta^{-1}$ and thus define $\beta$-H\"{o}lder
continuous solutions for \eqref{secdeg}.

Finally choosing $\bar\varepsilon(\beta)=(8k\tilde{E}_\beta)^{-1}$ we see
that, for
$|\varepsilon|\leq \bar\varepsilon(\beta)$, Lemma \ref{sequence}, and
\eqref{VL} imply that $\|\mathcal{V}_{i,j,\varepsilon}\|_\infty\leq k^{-1}$ while, by definition, $\mathcal{V}_{i,\e,i}(\psi)=\mathrm{Id}$.
Therefore, by Gershgorin circle theorem, the matrix
$\mathcal{V}_{\varepsilon}(\psi)$ is invertible for every $\psi$. This
conclude the proof of Theorem
\ref{teo2}.

\section{Proof of Theorem \ref{cor2}}\label{SRB}

To proof Theorem \ref{cor2} we closely follow Chapter 10 of
Ref.~\onlinecite{GBG}. We
report here only the main steps of the argument and refer the reader to the
relevant statements in Ref.~\onlinecite{GBG} for the technical parts.

The main tool to study $\mu_\e$ is the construction of a suitable {\it symbolic
dynamics} for $A_0$. Let $\mathbf Q=\{Q_1,\ldots, Q_J\}$ be a partition of
$\TTT^d$
in $J$ measurable set $Q_i$, $i=1\ldots,J$. Since $A_0$ is hyperbolic, if
$\mathrm{diam}(Q_i)$ is small enough we have that, for every
$\ul\sigma\in\{1,\ldots,J\}^{\ZZZ}$, with
$\ul\sigma=(\ldots,\sigma_{-1},\sigma_0,\sigma_1,\ldots)$, the set
\[
 U_{\ul\sigma}=\bigcap_{i=-\infty}^{\infty}A_0^i(Q_{\sigma_i})
\]
contains at most one point. We call this point $X_0(\ul\sigma)$ so that $X_0$,
the {\it symbolic map}, is a function from a subset $M\subset
\{1,\ldots,J\}^{\ZZZ}$ to $\TTT^d$. Setting $v(\ul\sigma,\ul\tau)=\min\{|i|\,;\,
\sigma_i\not=\tau_i)$ and $\delta(\ul\sigma,\ul\tau)=2^{-v(\ul\sigma,\ul\tau)}$
we
see that $X_0$ is an H\"older continuous function with respect to the distance
$\delta$.

In general, a symbolic dynamics is not very useful since the set $M$ can be very
complex. Sometime it is possible to construct partitions for which the set $M$
is relatively simple. Let $T\in\{0,1\}^{J\times J}$ be defined as
$T_{\sigma,\tau}=1$  if $A_0(Q_\sigma)\cap Q_\tau\not=\emptyset$ and 0
otherwise. We say that a partition $\mathbf Q$ is a Markovian pavement with
compatibility matrix $T$ when $\ul\sigma\in M$ if and only if
$T_{\sigma_i,\sigma_{i+1}}=1$ for every $i\in\ZZZ$.
%
Since
$A_0$ is Anosov, it follows from Ref.~\onlinecite{Bo70} (see also
Ref.~\onlinecite{Ru78}) that it
admits a Markovian
pavement $\mathbf Q$ with a mixing transition matrix $T$, that is there
exists $r$ such that $\left(T^r\right)_{i,j}>0$ for every
$i,j\in\{1,\ldots,J\}$. In this case we call $M=\Sigma_T$ (see
Ref.~\onlinecite{GBG}
Chapters 4.1 and 4.2).

Observe now that if $\mathbf Q$ is a Markovian pavement for $A_0$ then
$H_\e(\mathbf Q)=\{H_\e(Q_1),\ldots,H_\e(Q_J)\}$ is a Markovian pavement for
$A_\e$ with symbolic map $X_\e(\ul\sigma)=H_\e(X_0(\ul\sigma))$. Finally we have
that $A_\e(X_\e(\ul\sigma))=X_\e(s(\ul\sigma))$ where
$s:\Sigma_T\mapsto\Sigma_T$ is the {\it shift map} defined by
$(s(\ul\sigma))_i=\sigma_{i+1}$. Thus $X_\e$ can be seen as a conjugation
between $A_\e$ acting on $\TTT^d$ and the {\it subshift of finite type}
generated by $s$ acting on $\Sigma_M$.

We can now lift the one parameter family of measure $\mu_\e$ on $\TTT^d$ to
the one parameter family $m_\e=X_\e^*\mu_\e$ on $\Sigma_T$. Moreover, it
follows from \eqref{expa} that
$A^u_\e(\sigma)=\log(|l^u_\e(X_{\e}(\sigma))
|)$ is analytic in $\e$ and H\"older continuous in $\sigma$.
Given any $\ul\tau\in\Sigma_T$ (the {\it boundary conditions}), we define
\[
\Psi_{\e,[-n,n]}(\sigma_{-n},\ldots,\sigma_n)=
A^u_\e(\ul\sigma_{[-n+1,n+1]}\wedge \ul\tau)-A^u_\e(\ul\sigma_{[-n,n]}\wedge
\ul\tau)
\]
where $(\ul\sigma_{[-n,n]}\wedge \ul\tau)_i=\sigma_i$ if $i\in [-n,n]$ while
$(\ul\sigma_{[-n,n]}\wedge \ul\tau)_i=\tau_i$ otherwise\footnote{Care must be
paid to the fact that the mixing time $r$ of $T$ may be strictly positive. In
such a case the definition of $\ul\sigma_{[-N,N]}\wedge \ul\tau$ is slightly
more complex. See (4.3.10) in Ref.~\onlinecite{GBG} and surrounding
discussion for
more details.}. Moreover we set $\Psi_{\e,X}(\ul\sigma_X)=0$ if $X\not=[-n,n]$
for some $n$. It follows that $\Psi_\e$ is a Fisher Potential according to
Definition 7.3.1 in Ref.~\onlinecite{GBG} and $\|\Psi_\e\|_\kappa\leq C\e$ for
suitable
constants $C$ and $\kappa$ (see (7.3.2) in Ref.~\onlinecite{GBG}).
Thus, using Proposition 4.3.2 and 6.3.3 of Ref.~\onlinecite{GBG} we get that
$m_\e$ is the
Gibbs state on $\Sigma_M$ generated by the potential $\Psi_\e$, see Definition
5,1,2 in Ref.~\onlinecite{GBG}.

Finally we apply Proposition 7.3.2 of Ref.~\onlinecite{GBG} and obtain that
$m_\e$ is
analytic in $\e$, that is for every H\"older continuous function $\hat
f:\Sigma_T\mapsto \RRR$ we have that
\[
 \int_{\Sigma_T}\hat f(\ul\sigma)dm_\e(\ul\sigma)
\]
is an analytic function of $\e$ for $\e$ small enough,  (see also
Ref.~\onlinecite{Co80}).
To
conclude the proof we observe that if $f:\TTT^d\mapsto\RRR$ is H\"older
continuous then
\[
\int_{\mathds{T}^{d}}f(
H^{-1}_{\e}(\psi))d\mu_{\e}(\psi)= \int_{\Sigma_T}
f(X_0(\ul\sigma))dm_\e(\ul\sigma)
\]
with $f(X_0(\ul\sigma))$ H\"older continuous in $\ul\sigma$.

\section{Conclusion}\label{concl}

In this section we present further details about a possible application of the
present work, by discussing a concrete and very simple example obtained by
coupling two linear automorphisms $B_1$ and $B_2$ of $\mathds{T}^2$. More
precisely consider the system $A_\varepsilon$ acting on $\mathds T^4=\mathds
T^2\times\mathds T^2$ and given by \eqref{A} with $A_0(\psi)=A_0\psi\mod 2\pi$
and
\begin{equation*}
A_0=	\begin{pmatrix}
	B_1 & 0 \\
	0 & B_2
\end{pmatrix}\,.
\end{equation*}
Assume moreover that, for $(\psi_1,\psi_2)\in \mathds T^2\times\mathds T^2$, we
have
\begin{equation}\label{split}
	F(\psi_1,\psi_2)=\begin{pmatrix} G_1(\psi_1) \\ G_2(\psi_2)\end{pmatrix}\, .
\end{equation}
with $G_i:\TTT^2\mapsto \RRR^2$.
If $\lambda_{i,1,\varepsilon}$ and $\lambda_{i,2,\varepsilon}$, $i=1,2$, are the
Lyapunov exponents of the dynamical system $B_i+\varepsilon G_i \mod 2\pi$ on
$\mathds \TTT^2$ then, from Ref.~\onlinecite{Young}, we know that the Hausdorff
dimension
of the SRB measure $\mu_\varepsilon$ is
\begin{equation}\label{You}
	{\rm dim}_{\rm HD}(\mu_\varepsilon)=
	4-\frac{\lambda_{1,1,\varepsilon}+\lambda_{1,2,\varepsilon}}
	{\lambda_{1,1,\varepsilon}}-
	\frac{\lambda_{2,1,\varepsilon}+\lambda_{2,2,\varepsilon}}
	{\lambda_{2,1,\varepsilon}}
\end{equation}
while the KY conjecture gives
\begin{equation}\label{KYe}
	{\rm dim}_{\rm L}(\mu_\varepsilon)=
	4-\frac{\lambda_{1,1,\varepsilon}+\lambda_{1,2,\varepsilon}+
	\lambda_{2,1,\varepsilon}+\lambda_{2,2,\varepsilon}}
	{\lambda_{1,1,\varepsilon}}
\end{equation}
From the present works it follows that if $B_1\not=B_2$, so that
$\lambda_{1,i,0}\not=\lambda_{2,i,0}$, then, for $\epsilon$ small, \eqref{You}
and \eqref{KYe} give different results and the KY conjecture cannot be true. As
stated in the introduction, it would suffice to show that ${\rm dim}_{\rm
HD}(\mu_\varepsilon)$ is continuous in $\varepsilon$ to obtain that the KY
conjecture is not verified in an open neighbor of $A_0$. We think we can prove
that ${\rm dim}_{\rm HD}(\mu_\varepsilon)$ is actually an analytic function of
$\varepsilon$.

It is interesting to notice that if $B_1=B_2$ and $G_1=G_2$ then \eqref{You}
and \eqref{KYe} agree. This suggests that, at least in this context, the KY
conjecture can depend on some extra symmetry of the system. A natural conjecture
is that the KY conjecture holds true when
$B_1=B_2$ and
\[
F(\psi_1,\psi_2)= F(\psi_2,\psi_1)\, .
\]
Notwithstanding this, if $F$ is not of the form \eqref{split} while $B_1=B_2$,
our results only tell us that
$\lambda_{1,1,\varepsilon}+\lambda_{2,1,\varepsilon}$ is a smooth function of
$\varepsilon$. Thus a first step toward a proof of our conjecture is to extend
the analysis in this paper to understand when and how a perturbation can resolve
the degeneracy of the Lyapunov exponents.


\providecommand{\bysame}{\leavevmode\hbox to3em{\hrulefill}\thinspace}
\providecommand{\MR}{\relax\ifhmode\unskip\space\fi MR }
\providecommand{\MRhref}[2]{%
  \href{http://www.ams.org/mathscinet-getitem?mr=#1}{#2}
}
\providecommand{\href}[2]{#2}

\end{document}